\def\setliststart#1{\setcounter{\@listctr}{#1}%
  \addtocounter{\@listctr}{-1}}
 \newtheorem{The}{Theorem}[section]
 \newtheorem{Cor}[The]{Corollary}
 \newtheorem{Lem}[The]{Lemma}
 \newtheorem{Pro}[The]{Proposition}
 \theoremstyle{definition}
 \theoremstyle{remark}
 \newtheorem{Rem}[The]{Remark}
 \numberwithin{equation}{section}
\newcommand{\R}{\mathbb{R}}
\title[Representation formulas for contact type Hamilton-Jacobi equations]{Representation formulas for  contact type Hamilton-Jacobi equations}
\author{Jiahui Hong \and Wei Cheng \and Shengqing Hu \and Kai Zhao}
\address{Department of Mathematics, Nanjing University, Nanjing 210093, China}
\email{623006874@qq.com}
\address{Department of Mathematics, Nanjing University, Nanjing 210093, China}
\email{chengwei@nju.edu.cn}
\address{Department of Mathematics, Nanjing University, Nanjing 210093, China}
\email{shengqinghu@nju.edu.cn}
\address{Department of Mathematics, Nanjing University, Nanjing 210093, China}
\email{15251879427@163.com}
\date{\today}
\keywords{Hamilton-Jacobi equation, representation formula, viscosity solutions}
\begin{document}
\maketitle

\begin{abstract}
	We discuss various kinds of representation formulas for the viscosity solutions of the contact type Hamilton-Jacobi equations by using the Herglotz' variational principle.
\end{abstract}

\section{Introduction}
Let $M$ be a $C^1$ connected and compact manifold without boundary. Let $TM$ and $T^*M$ denote the tangent and cotangent bundles respectively. A point of $TM$ will be denoted by $(x,v)$ with $x\in M$ and $v\in T_xM$, and a point of $T^*M$ by $(x, p)$ with $p\in  T_x^*M$ is a linear form on the vector space $T_xM$. With a slight abuse of notation, we shall denote by $|\cdot|_x$ the norm on the fiber $T_xM$ and also the dual norm on $T_x^*M$.

In this paper, we want to discuss the representation formula for the viscosity solutions of the evolutionary Hamilton-Jacobi equation
\begin{equation}\label{eq:HJe}\tag{HJ$_e$}
\begin{cases}
	D_tu(t,x)+H(t,x,u(t,x),D_xu(t,x))=0,& (t,x)\in(0,+\infty)\times M\\
	u(0,x)=\phi(x),& x\in M,
\end{cases}
\end{equation}
and the stationary equation
\begin{equation}\label{eq:intro_HJs}\tag{HJ$_s$}
H(x,u(x),Du(x))=0,\quad x\in M.
\end{equation}
Here we suppose $0$ on the right side of \eqref{eq:intro_HJs} belongs to the set of Ma\~n\'e's critical values.

Because of the Lagrangian formalism, we endow some suitable conditions on the associated Lagrangian $L$ with respect to a convex $H$ defined by 
\begin{align*}
	L(s,x,v,u)=\sup_{p\in T_x^*M}\{p\cdot v -H(s,x,p,u)\},\quad (s,x,v,u)\in \R\times TM\times\R.
\end{align*}
The conditions on $L$ are imposed at the beginning of Section 2.

The representation formula for the viscosity solutions of the Hamilton-Jacobi equations in various kind problems typically connects the solution of PDEs to the value function for the relevant problems from calculus of variations and optimal control. A representation formula provides further information on the underlying dynamical systems which is important for certain finer analysis of the solutions including qualitative Lipschitz and semiconcavity estimate and some dynamical systems implications. For classical convex Hamiltonians, see, for instance, \cite{Lions_book,Ishii1985,Ishii1988,Bardi_Capuzzo-Dolcetta1997,Cannarsa_Sinestrari_book,Rampazzo2005,Barles_book} and \cite{Ishii_Mitake2007,Fathi_Siconolfi2004,Fathi_Siconolfi2005,Mitake2008,Castelpietra_Rifford2010,Mantegazza_Mennucci2003} for weak KAM and geometric aspects .

The representation formula for the viscosity solutions of \eqref{eq:HJe} and \eqref{eq:intro_HJs} is known for the discounted systems even in the early period of the theory of viscosity solutions. A systematical approach of equations \eqref{eq:HJe} and \eqref{eq:intro_HJs} in general firstly appears in \cite{Wang_Wang_Yan2017,Wang_Wang_Yan2019_1} in an implicit way. An alternative Lagrangian approach is based on a rigorous treatment of the classical Herglotz' variational principle (\cite{CCWY2018,CCJWY2019}). Recall some basic results from \cite{CCJWY2019,CCWY2018} on the Herglotz' variational principle and the Hamilton-Jacobi equations of contact type. 

For any $t_2>t_1$, $u_0\in\R$ and $x,y\in M$, denote the set
\begin{align*}
	\Gamma^{t_1,t_2}_{x,y}=\{\xi\in W^{1,1}([t_1,t_2],M ): \xi(t_1)=x,\xi(t_2)=y\},
\end{align*}
and consider the following Carath\'eodory equation
\begin{equation}\label{eq:caratheodory_L}
\left\{
\begin{aligned}
&\dot{u}_{\xi}(s)=L(s,\xi(s),\dot{\xi}(s),u_\xi(s))  \quad a.e.\ s\in[t_1,t_2]\\
&u_{\xi}(t_1)=u_0.\\
\end{aligned}
\right.
\end{equation}
It is clear that equation \eqref{eq:caratheodory_L} admits a unique solution (see \cite{Coddington_Levinson_book}). We define
\begin{equation}\label{eq:fundamental_solution}
	h_L(t_1,t_2,x,y,u_0):=\inf_{\xi\in\Gamma^{t_1,t_2}_{x,y}}\int^{t_2}_{t_1}L(s,\xi(s),\dot{\xi}(s),u_{\xi}(s))\ ds.
\end{equation}
As showed in \cite{CCJWY2019}, the infimum in the definition of the \emph{negative-type fundamental solution} $h_L(t_1,t_2,x,y,u_0)$ can be achieved and any minimizer $\xi\in\Gamma^{t_1,t_2}_{x,y}$ is as smooth as $L$. We introduce the associated Lax-Oleinik operator
\begin{align*}
	(\mathbf{T}^{t_2}_{t_1}\phi)(x)=\inf_{y\in\R^n}\{\phi(y)+h_L(t_1,t_2,y,x,\phi(y))\},\quad \quad t_2>t_1, x\in M,
\end{align*}
where $\phi:M\to[-\infty,+\infty]$ is any function. For any $t>0$ and $x\in M$, set
\begin{equation}\label{eq:viscosity_solution}
	u(t,x):=(\mathbf{T}^{t}_{0}\phi)(x)=\inf_{y\in\R^n}\{\phi(y)+h_L(0,t,y,x,\phi(y))\}.
\end{equation}
It is known that $u(t,x)$ defined in \eqref{eq:viscosity_solution} is a viscosity solution of \eqref{eq:HJe} (see Proposition \ref{facts_viscosity_soluiton} for a precise statement). 

Comparing to the implicit representation formula in \cite{Wang_Wang_Yan2019_1}, an advantage of Herglotz' variational principle is that, one can obtain various kind of representation formulas by choosing different ways to solve the Carath\'eodory equation \eqref{eq:caratheodory_L}. These representation formulas are also useful for many applications. One example is the problem on vanishing discount (\cite{DFIZ2016}) and vanishing contact structure (\cite{Zhao_Cheng2018,CCIZ2019}), where such a representation formula play an important role. Another example is the problem on the propagation of singularities. The regularity properties of the fundamental solution such as quantitative semiconcavity and convexity estimates can be obtained by using Herglotz' variational principle, as well as the representation formulas for both negative-type and positive-type fundamental solutions. which will be adapted to our intrinsic method developed in \cite{Cannarsa_Cheng3,Cannarsa_Cheng_Fathi2017,CCMW2018,Cannarsa_Cheng_Fathi2018}.

The paper is organized as follows. We discuss the representation formulas in Section 2 and Section 3 for evolutionary equation \eqref{eq:HJe} and stationary equation \eqref{eq:intro_HJs} respectively. The last section contains some concluding remarks including the discounted systems comparing to some known results.

\medskip

\noindent\textbf{Acknowledgements.} Wei Cheng is partly supported by Natural Scientific Foundation of China (Grant No. 11871267, 11631006 and 11790272). The authors thank for Qinbo Chen for helpful discussion.

\section{representation formula of evolutionary equation}

We assume that $L$ is of class $C^1$. For the purpose of this paper, we need the following conditions:
\begin{enumerate}[(L1)]
    \item $L(s,x,\cdot,u)$  is strict convex on $T_xM$ for all $s\in\R$, $x\in M$ and $u\in \mathbb{R}$.
    \item  There exist $c_0 > 0$ and a superlinear and nondecreasing function $\theta_0:[0,+\infty)\to [0,+\infty)$, such that
	\begin{align*}
		L(s,x,v,0)\geq \theta_0(|v|_x)-c_0,\quad (s,x,v)\in\R\times TM.
	\end{align*}
	\item There exists $K>0$ such that
	\begin{align*}
		|L_u(s,x,v,u)|\leqslant K,\quad (s,x,v,u)\in\R\times TM\times\R.
	\end{align*}
	\item There exists $C_1,C_2>0$ such that $|L_t(s,x,v,u)|\leqslant C_1+C_2L(s,x,v,u)$ for all $(s,x,v,u)\in\R\times TM\times\R$.
	\item The map $u\mapsto L(s,x,v,u)$ is concave for all $(s,x,v)\in\R\times TM$.
	\item $L_u(s,x,v,u)<0$ for all $(s,x,v,u)\in\R\times TM\times\R$.
\end{enumerate}

\subsection{Herglotz' variational principle}

We recall some known results based on Herglotz' variational principle and the representation of the viscosity solutions. Set
\begin{align*}
	\mathcal{A}_{t,x}=\{\xi\in W^{1,1}([0,t],M): \xi(t)=x\}.
\end{align*}
We suppose condition (L1)-(L4) is satisfied.

\begin{Pro}[\cite{CCJWY2019}]\label{facts_viscosity_soluiton}
Let $\phi$ be lower semi-continuous and $(\kappa_1,\kappa_2)$-Lipschitz in the large\footnote{Let $(X,d)$ be a metric space. A function $\phi:X\to\R$ is called $(\kappa_1,\kappa_2)$-Lipschitz in the large if there exists $\kappa_1,\kappa_2\geqslant 0$ such that $|\phi(y)-\phi(x)|\leqslant\kappa_1+\kappa_2d(x,y)$, for all $x,y\in X$} and the function $u(t,x)$ be defined in \eqref{eq:viscosity_solution}.
\begin{enumerate}[\rm (1)]
	\item The function $u(t,x)$ is finite-valued.
	\item For any $t>0$ and $x\in M$ the function $y\mapsto \phi(y)+h_L(0,t,y,x,\phi(y))$ admits a minimizer.
	\item For any $t>0$ and $x\in M$, let $y_{t,x}$ be a minimizer of the function $y\mapsto \phi(y)+h_L(0,t,y,x,\phi(y))$. Then, there exists a minimizer $\xi_{t,x}\in\Gamma^{0,t}_{y_{t,x},x}$ for $h_L(0,t,y_{t,x},x,\phi(y_{t,x}))$ such that
	\begin{align*}
		u(t,x)=\phi(y_{t,x})+\int^{t}_{0}L(s,\xi_{t,x}(s),\dot{\xi}_{t,x}(s),u_{\xi_{t,x}}(s))\ ds.
	\end{align*}
	\item Equivalently, for any $t>0$ and $x\in M$, there exists $\xi\in\mathcal{A}_{t,x}$ such that
	\begin{align*}
	    u(t,x)=\phi(\xi(0))+\int^{t}_{0}L(s,\xi(s),\dot{\xi}(s),u_{\xi}(s))\ ds.
	\end{align*}
	\item Moreover, $u(\cdot,x)$ is right continuous at $t=0$ for all $x\in M$, and the extension of $u$ on $[0,+\infty)\times M$ is a unique solution of \eqref{eq:HJe} in the sense of viscosity.
\end{enumerate}
\end{Pro}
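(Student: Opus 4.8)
The plan is to reduce every assertion to properties of the single object $u_\xi(t)$, exploiting the fact that along any $\xi$ the Carath\'eodory equation \eqref{eq:caratheodory_L} forces $\int_{t_1}^{t_2}L(s,\xi,\dot\xi,u_\xi)\,ds=u_\xi(t_2)-u_\xi(t_1)$. Consequently $h_L(0,t,y,x,u_0)=\inf_\xi u_\xi(t)-u_0$ over $\xi\in\Gamma^{0,t}_{y,x}$, so that $\phi(y)+h_L(0,t,y,x,\phi(y))=\inf_\xi u_\xi(t)$ with $u_\xi(0)=\phi(y)$, and finally $u(t,x)=\inf_{\xi\in\mathcal{A}_{t,x}}u_\xi(t)$ with $u_\xi(0)=\phi(\xi(0))$. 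I would first record the basic a priori estimates. Writing $L(s,\xi,\dot\xi,u_\xi)=L(s,\xi,\dot\xi,0)+\int_0^{u_\xi}L_u\,d\sigma$ and using (L3), Gronwall's inequality applied to $s\mapsto u_\xi(s)$ yields two-sided bounds on $u_\xi(s)$ in terms of $|u_0|$ and $\int_0^s L(\tau,\xi,\dot\xi,0)\,d\tau$; comparing two solutions along the same $\xi$ gives $|u_\xi(s)-\tilde u_\xi(s)|\leq|u_0-\tilde u_0|e^{Ks}$ and, from the sign of the linearised equation $\dot w=L_u w$, that $u_\xi(t)$ is strictly increasing in $u_0$. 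These make the fibrewise map $u_0\mapsto u_\xi(t)$ continuous and monotone, which is what the later arguments need.

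For (1), an upper bound for $u(t,x)$ comes from testing with the constant curve $\xi\equiv x$, so that $u(t,x)\leq\phi(x)+\int_0^t L(s,x,0,u_\xi)\,ds<\infty$ by the a priori bound on $u_\xi$. For the lower bound I would combine (L2) with the Lipschitz-in-the-large hypothesis: along any $\xi\in\mathcal{A}_{t,x}$ one has $L(s,\xi,\dot\xi,u_\xi)\geq\theta_0(|\dot\xi|_{\xi})-c_0-K|u_\xi|$, while $\phi(\xi(0))\geq\phi(x)-\kappa_1-\kappa_2\int_0^t|\dot\xi|_{\xi}\,ds$; superlinearity of $\theta_0$ absorbs the linear term $\kappa_2\int|\dot\xi|_{\xi}$, giving a finite lower bound uniform in $\xi$. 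The same coercivity drives the direct method for (2)--(4). For (3) I would take a minimizing sequence for $h_L(0,t,y_{t,x},x,\phi(y_{t,x}))$, use (L2) and De la Vall\'ee Poussin to get equi-integrability of $\dot\xi_n$, extract $\xi_n\to\xi$ uniformly with $\dot\xi_n\rightharpoonup\dot\xi$ weakly in $L^1$, pass the Carath\'eodory solutions $u_{\xi_n}\to u_\xi$ to the limit by continuous dependence, and conclude lower semicontinuity of the action from convexity (L1) via a Tonelli/Ioffe-type theorem; statement (4) is then the relabelling $\xi(0)=y_{t,x}$. For (2), writing $G(y,a):=a+h_L(0,t,y,x,a)=\inf_\xi u_\xi^{(a)}(t)$, the monotonicity of $u_\xi(t)$ in $a$ and the continuity of $h_L$ make $G$ nondecreasing in $a$ and continuous, so composing with the lower semicontinuous $\phi$ keeps $y\mapsto G(y,\phi(y))$ lower semicontinuous, and minimisation over the compact $M$ produces $y_{t,x}$.

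For (5) I would first establish the dynamic programming principle $\mathbf{T}^t_0=\mathbf{T}^t_s\circ\mathbf{T}^s_0$, which rests on a Chapman--Kolmogorov identity for $h_L$ obtained by concatenating curves and invoking uniqueness of the solution of \eqref{eq:caratheodory_L}, so that $u_\xi$ of the concatenation agrees with its two pieces. From the DPP the viscosity sub- and supersolution inequalities follow by the standard optimal-control argument: testing with constant-velocity curves for the subsolution inequality, and evaluating along an optimal $\xi_{t,x}$ (on which $u_\xi(s)=u(s,\xi(s))$ by calibration) for the supersolution inequality. Right continuity at $t=0$ combines the constant-curve bound $\limsup_{t\to0^+}u(t,x)\leq\phi(x)$ with coercivity, which forces $y_{t,x}\to x$ as $t\to0^+$ and hence, by lower semicontinuity of $\phi$, $\liminf_{t\to0^+}u(t,x)\geq\phi(x)$. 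Uniqueness is the comparison principle: since (L3) only makes the associated $H$ Lipschitz, not monotone, in $u$, I would run the doubling-of-variables argument and absorb the $u$-dependence by a Gronwall factor $e^{Kt}$, yielding uniqueness on each finite interval and hence on $[0,+\infty)$.

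The principal difficulty throughout is the implicit coupling in \eqref{eq:caratheodory_L}: the functional to be minimised is not $\int L(s,\xi,\dot\xi)\,ds$ but depends on $\xi$ also through the solution $u_\xi$, so neither coercivity nor lower semicontinuity of the action is automatic. The crux is therefore the stability of the Carath\'eodory flow, namely $u_{\xi_n}\to u_\xi$ under the weak convergence $\dot\xi_n\rightharpoonup\dot\xi$, together with lower semicontinuity of the coupled functional; once this is in hand the remaining steps follow the classical weak KAM and optimal-control template. A secondary care point is the non-monotone $u$-dependence, which forbids a naive comparison principle and must be handled by the exponential Gronwall correction noted above.
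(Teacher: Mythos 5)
The paper itself contains no proof of this proposition: it is imported verbatim from \cite{CCJWY2019}, so the only possible comparison is with the method of that reference. Your sketch does follow the same architecture: the reduction $u(t,x)=\inf_{\xi\in\mathcal{A}_{t,x}}u_\xi(t)$ coming from integrating \eqref{eq:caratheodory_L}, Gronwall-type a priori estimates and monotonicity in $u_0$ from (L3), the direct method with convexity-based lower semicontinuity for existence of minimizers, the semigroup (dynamic programming) property of the Lax--Oleinik operator for the viscosity property, and a comparison principle in which the merely Lipschitz (non-monotone) $u$-dependence of $H$ is absorbed by an exponential factor. All of these ingredients are the right ones.

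There is, however, one step that fails as stated, and you yourself flag it as the crux: in part (3) you propose to ``pass the Carath\'eodory solutions $u_{\xi_n}\to u_\xi$ to the limit by continuous dependence'' when $\dot\xi_n\rightharpoonup\dot\xi$ only weakly in $L^1$. Continuous dependence requires strong convergence of the data, and $\dot\xi_n$ enters $L$ nonlinearly; the claimed stability is simply false in general. Take $L(x,v,u)=|v|^2-u$ and zigzag curves $\xi_n$ with $|\dot\xi_n|\equiv 1$ converging uniformly to a constant curve $\xi$: then $u_{\xi_n}$ solves $\dot u=1-u$ while $u_\xi$ solves $\dot u=-u$, so $\lim_n u_{\xi_n}\neq u_\xi$. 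What is true, and all the direct method needs, is a one-sided statement: by equi-integrability and Arzel\`a--Ascoli, $u_{\xi_n}\to w$ uniformly for some absolutely continuous $w$; applying the Ioffe-type lower semicontinuity theorem on every subinterval (the arguments $(\xi_n,u_{\xi_n})$ converge uniformly, only $\dot\xi_n$ converges weakly) gives $\dot w(s)\geq L(s,\xi(s),\dot\xi(s),w(s))$ a.e.; a one-sided Gronwall argument using (L3) then yields $w\geq u_\xi$ on the whole interval, whence $u_\xi(t)\leq\lim_n u_{\xi_n}(t)=\inf$, so the limit curve is a minimizer. The convergence $u_{\xi_n}\to u_\xi$ is recovered only a posteriori, along minimizing sequences, as a consequence of minimality rather than as an input. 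With this repair --- which is how \cite{CCJWY2019} actually argues --- the rest of your outline (parts (1), (2), (4), (5), including the monotone-in-$a$ argument for lower semicontinuity of $y\mapsto\phi(y)+h_L(0,t,y,x,\phi(y))$ and the exponential correction in the comparison proof) goes through.
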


\subsection{Representation formula in general} 

The representation formula for the viscosity solutions of Hamilton-Jacobi equation \eqref{eq:HJe} and \eqref{eq:intro_HJs} was first systematically studied in the papers \cite{Wang_Wang_Yan2017,Wang_Wang_Yan2019_1} by using an implicit variational principle for $M$ being compact and $L$ being time-independent. Equivalently, by using Herglotz' variational principle (see \cite{CCJWY2019,CCWY2018}), we have our first representation formula.

\begin{Pro}[Representation formula I]
Suppose $L$ satisfies condition \mbox{\rm (L1)-(L4)} and $H$ is the associated Hamiltonian. If $\phi$ is lower semi-continuous and $(\kappa_1,\kappa_2)$-Lipschitz in the large, then the unique viscosity solution $u$ of \eqref{eq:HJe} has the following representation: for any $t>0$ and $x\in M$,
\begin{equation}\label{eq:rep1}
	u(t,x)=\inf_{\xi\in\mathcal{A}_{t,x}} u_\xi(t) =\inf_{\xi\in\mathcal{A}_{t,x}}\left\{\phi(\xi(0))+\int^t_0L(s,\xi(s),\dot{\xi}(s),u_{\xi}(s))\ ds\right\},
\end{equation}
where $u_{\xi}$ is uniquely determined by \eqref{eq:caratheodory_L} with $u_0=\phi(\xi(0))$.
\end{Pro}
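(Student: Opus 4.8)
The plan is to read the asserted formula as a reorganization of the defining expression \eqref{eq:viscosity_solution} for $u$ through the Lax--Oleinik operator, split into the two displayed equalities. The first equality, $u(t,x)=\inf_{\xi\in\mathcal{A}_{t,x}}u_\xi(t)$, records that along any admissible curve the terminal value of the associated Carath\'eodory solution \emph{is} the accumulated action; the second equality is then a purely combinatorial decomposition of a nested infimum. Thus no new analysis is needed beyond the facts already recalled in the excerpt, and the work consists in carefully matching the two variational set-ups.

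First I would establish the pointwise identity $u_\xi(t)=\phi(\xi(0))+\int_0^t L(s,\xi(s),\dot\xi(s),u_\xi(s))\,ds$ for every $\xi\in\mathcal{A}_{t,x}$. This is immediate upon integrating the Carath\'eodory equation \eqref{eq:caratheodory_L} over $[0,t]$ with initial datum $u_0=\phi(\xi(0))$; the relevant initial value problem is uniquely and globally solvable, as already recorded just after \eqref{eq:caratheodory_L} and underpinned by the uniform bound (L3) on $L_u$ via a standard Gronwall estimate. In particular the solution $u_\xi$ entering this identity is literally the same object used to define $h_L$, once the initial value is matched.

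Next I would prove the two inequalities between $\inf_{\xi\in\mathcal{A}_{t,x}}u_\xi(t)$ and $u(t,x)$. For the bound $\leq$, Proposition \ref{facts_viscosity_soluiton}(3)--(4) already exhibits a curve $\xi_{t,x}\in\mathcal{A}_{t,x}$ with $u(t,x)=\phi(\xi_{t,x}(0))+\int_0^t L\,ds=u_{\xi_{t,x}}(t)$, so the infimum over $\mathcal{A}_{t,x}$ is at most $u(t,x)$. For the reverse bound $\geq$, I would take an arbitrary $\xi\in\mathcal{A}_{t,x}$, set $y=\xi(0)$, and observe that $\xi\in\Gamma^{0,t}_{y,x}$ with $u_\xi(0)=\phi(y)$; hence $\int_0^t L\,ds\geq h_L(0,t,y,x,\phi(y))$ by \eqref{eq:fundamental_solution}, which gives $u_\xi(t)\geq\phi(y)+h_L(0,t,y,x,\phi(y))\geq u(t,x)$ by \eqref{eq:viscosity_solution}. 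Taking the infimum over $\xi$ closes the estimate.

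The only point requiring genuine care, and the place where the two formulations must be reconciled, is the book-keeping of the Carath\'eodory initial condition: the value $h_L(0,t,y,x,\phi(y))$ is built from solutions with $u_0=\phi(y)$ over curves in $\Gamma^{0,t}_{y,x}$, whereas $u_\xi(t)$ is built with $u_0=\phi(\xi(0))$ over curves in $\mathcal{A}_{t,x}$. These coincide precisely because $\xi(0)=y$, and the decomposition $\mathcal{A}_{t,x}=\bigcup_{y\in M}\Gamma^{0,t}_{y,x}$ then converts the single infimum over $\mathcal{A}_{t,x}$ into the nested infimum $\inf_{y}\inf_{\xi\in\Gamma^{0,t}_{y,x}}$ defining $u$. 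No optimization difficulty arises beyond this matching, since the required attainment is already furnished by Proposition \ref{facts_viscosity_soluiton} and by the cited solvability of \eqref{eq:caratheodory_L}; the substance of the result lies entirely in the variational framework recalled above.
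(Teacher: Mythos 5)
Your proposal is correct and follows essentially the same route as the paper, which states this proposition without a written proof precisely because it is the definition \eqref{eq:viscosity_solution} unwound: the decomposition $\mathcal{A}_{t,x}=\bigcup_{y\in M}\Gamma^{0,t}_{y,x}$ turns the nested infimum $\inf_y\{\phi(y)+h_L(0,t,y,x,\phi(y))\}$ into the single infimum over $\mathcal{A}_{t,x}$, with attainment and the identification $u_\xi(t)=\phi(\xi(0))+\int_0^t L\,ds$ supplied by Proposition \ref{facts_viscosity_soluiton} and \eqref{eq:caratheodory_L}. Your two-inequality write-up just makes explicit the matching of initial data that the paper treats as immediate from Herglotz' variational principle.
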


The second representation formula for the viscosity solutions of Hamilton-Jacobi equation \eqref{eq:HJe} appears in \cite{Zhao_Cheng2018}. Here we give a slight extension for the time-dependent Lagrangian on manifold, with a different proof.

\begin{Pro}[Representation formula II]\label{RF_II}
Suppose $L$ satisfies condition \mbox{\rm (L1)-(L4)} and $H$ is the associated Hamiltonian. If $\phi$ is lower semi-continuous and $(\kappa_1,\kappa_2)$-Lipschitz in the large, then the unique viscosity solution $u$ of \eqref{eq:HJe} has the following representation: for any $t>0$ and $x\in M$,
\begin{equation}\label{eq:rep2}
	u(t,x)=\inf_{\xi\in\mathcal{A}_{t,x}}\left\{e^{\int_0^t L_u d\tau}\phi(\xi(0))+\int^t_0e^{\int_s^t L_u d\tau}(L-u_{\xi}L_u)\ ds\right\},
\end{equation}
where $L_u(s)=L_u(s,\xi(s),\dot{\xi}(s),u_{\xi}(s))$ and $u_{\xi}$ is uniquely determined by \eqref{eq:caratheodory_L} with $u_0=\phi(\xi(0))$.
\end{Pro}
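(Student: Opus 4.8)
The plan is to deduce \eqref{eq:rep2} directly from Representation formula I, by showing that the two expressions inside the infima coincide for every \emph{fixed} curve $\xi\in\mathcal{A}_{t,x}$. Since Representation formula I asserts $u(t,x)=\inf_{\xi\in\mathcal{A}_{t,x}}u_\xi(t)$, it suffices to establish the closed-form identity
\begin{equation*}
u_\xi(t)=e^{\int_0^t L_u\,d\tau}\phi(\xi(0))+\int_0^t e^{\int_s^t L_u\,d\tau}\bigl(L-u_\xi L_u\bigr)\,ds
\end{equation*}
for each $\xi$, where $u_\xi$ solves the Carath\'eodory equation \eqref{eq:caratheodory_L} with $u_0=\phi(\xi(0))$. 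Taking the infimum over $\xi\in\mathcal{A}_{t,x}$ on both sides then yields \eqref{eq:rep2} at once, with no further work.

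The key observation is that, once $\xi$ is fixed, the nonlinear Carath\'eodory equation is in fact linear in the unknown $u_\xi$. Along the solution I set $a(s):=L_u(s,\xi(s),\dot\xi(s),u_\xi(s))$ and use the trivial algebraic identity $L=a\,u_\xi+(L-a\,u_\xi)$; substituting into \eqref{eq:caratheodory_L} gives
\begin{equation*}
\dot u_\xi(s)=a(s)\,u_\xi(s)+\bigl(L(s)-u_\xi(s)\,a(s)\bigr)\qquad\text{a.e. }s\in[0,t],
\end{equation*}
a linear first-order ODE whose coefficient $a$ and inhomogeneous term $L-u_\xi a$ are now regarded as prescribed functions of $s$, determined by the already-known solution $u_\xi$. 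By condition \mbox{\rm (L3)} one has $|a(s)|\leqslant K$, so $a\in L^\infty([0,t])$ and the integrating factor $s\mapsto e^{-\int_0^s a\,d\tau}$ is well defined and absolutely continuous.

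I would then solve this linear ODE by the integrating-factor (variation-of-constants) method. Multiplying through by $e^{-\int_0^s a\,d\tau}$ and recognising the left-hand side as the a.e.\ derivative of $s\mapsto e^{-\int_0^s a\,d\tau}u_\xi(s)$, I integrate from $0$ to $t$, insert the initial condition $u_\xi(0)=\phi(\xi(0))$, and multiply back by $e^{\int_0^t a\,d\tau}$; after simplifying $e^{\int_0^t a}e^{-\int_0^s a}=e^{\int_s^t a}$ this reproduces exactly the claimed identity, hence \eqref{eq:rep2}.

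The only delicate point — and the main, though mild, obstacle — is the low regularity: $\xi\in W^{1,1}$, so $\dot\xi$ is merely integrable and $u_\xi$ is absolutely continuous rather than $C^1$. The product-rule computation $\frac{d}{ds}\bigl(e^{-\int_0^s a}u_\xi\bigr)=e^{-\int_0^s a}(\dot u_\xi-a\,u_\xi)$ and the subsequent use of the fundamental theorem of calculus must therefore be justified for absolutely continuous functions. This is precisely where \mbox{\rm (L3)} enters: the boundedness of $L_u$ makes $a\in L^\infty\subset L^1$ and the integrating factor Lipschitz, while $u_\xi$ is continuous on $[0,t]$ and hence bounded, so that $u_\xi a$ is bounded and $L-u_\xi a\in L^1([0,t])$. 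All the manipulations are then legitimate within the Carath\'eodory/absolutely-continuous framework, and no convexity or compactness beyond this is required, so the remainder of the argument is routine.
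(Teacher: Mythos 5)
Your proof is correct and takes essentially the same route as the paper's: you rewrite the Carath\'eodory equation \eqref{eq:caratheodory_L} as a linear ODE in $u_\xi$ with coefficient $L_u$ (the paper does this by adding $-u_\xi L_u$ to both sides), apply the integrating factor $e^{-\int_0^s L_u\,d\tau}$, integrate from $0$ to $t$, and conclude by taking the infimum via Representation formula I, $u(t,x)=\inf_{\xi\in\mathcal{A}_{t,x}}u_\xi(t)$. The only difference is that you make explicit the absolute-continuity and boundedness justifications (via (L3)) that the paper leaves implicit, which is a welcome refinement rather than a departure.
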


\begin{proof}
Adding a term $-u_{\xi}(s)L_u$ to the both sides of \eqref{eq:caratheodory_L}, we obtain
\begin{align*}
	\dot{u}_{\xi}-u_{\xi}L_u(s,\xi,\dot{\xi},u_{\xi})=L(s,\xi,\dot{\xi},u_{\xi})-u_{\xi}L_u(s,\xi,\dot{\xi},u_{\xi})
\end{align*}
which leads to
\begin{align*}
	\frac{d}{ds}\left(e^{-\int_0^s L_ud\tau} u_{\xi} \right)=e^{-\int_0^s L_u d\tau} (L-u_{\xi}L_u),
\end{align*}
Thus \eqref{eq:rep2} follows by integrating both sides from $0$ to $t$. Due to the relation $u(t,x)=\inf_{\xi\in\mathcal{A}_{t,x}}u_{\xi}(t)$, this completes the proof.
\end{proof}

\subsection{Representation formula for $u$-concave Lagrangian}

Fix any $t_2>t_1$, $u_0\in\R$ and $x,y\in M$. Let $\xi^*\in\Gamma^{t_1,t_2}_{x,y}$ be a minimizer for the functional $u_{\xi}(t)$ where $u_{\xi}$ is uniquely determined by \eqref{eq:caratheodory_L}. Consider a new Carath\'eodory equation
 \begin{equation}\label{eq:Cara2}
	\begin{cases}
		\dot{v}_{\eta}=L(s,\eta,\dot{\eta},u_{\xi^*})+L_u(s,\eta,\dot{\eta},u_{\xi^*})(v_{\eta}-u_{\xi^*}),\quad a.e., s\in[t_1,t_2]\\
		v_{\eta}(t_1)=u_0,
	\end{cases}
\end{equation}
where $v_{\eta}(s)=v_{\xi^*,\eta}(s)$ for $\eta\in\Gamma^{t_1,t_2}_{x,y}$.

\begin{Lem}\label{equiv1}
Suppose $L$ satisfies condition \mbox{\rm (L1)-(L5)} and $H$ is the associated Hamiltonian. If $u_{\xi}$ and $v_{\eta}=v_{\xi^*,\eta}$ are determined by \eqref{eq:caratheodory_L} and \eqref{eq:Cara2} for $\xi,\eta\in\Gamma^{t_1,t_2}_{x,y}$ respectively, then, 
\begin{enumerate}[\rm (1)]
	\item we have
	\begin{align*}
	    \inf_{\xi\in\Gamma^{t_1,t_2}_{x,y}}u_{\xi}(t_2)=\inf_{\eta\in\Gamma^{t_1,t_2}_{x,y}}v_{\eta}(t_2);
	\end{align*}
	\item $\xi^*\in\arg\min_{\eta\in\Gamma^{t_1,t_2}_{x,y}}v_{\xi^*,\eta}(t_2)\subset\arg\min_{\xi\in\Gamma^{t_1,t_2}_{x,y}}u_\xi(t_2)$. In particular, if $\xi^*$ is a unique minimizer for $h_L(t_1,t_2,x,y,u_0)$, then the relation of inclusion is indeed an equality and each of two sets is a singleton.
\end{enumerate} 	
\end{Lem}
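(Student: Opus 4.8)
The plan is to exploit the fact that \eqref{eq:Cara2} is precisely the linearization of the $u$-dependence of $L$ about the reference solution $u_{\xi^*}$. Writing $g_\eta(s,u):=L(s,\eta(s),\dot\eta(s),u)$, equation \eqref{eq:caratheodory_L} reads $\dot u_\eta=g_\eta(s,u_\eta)$, while \eqref{eq:Cara2} reads $\dot v_\eta=g_\eta(s,u_{\xi^*})+\partial_ug_\eta(s,u_{\xi^*})(v_\eta-u_{\xi^*})$; that is, the right-hand side of \eqref{eq:Cara2} is the tangent line to the concave map $u\mapsto g_\eta(s,u)$ at the point $u=u_{\xi^*}(s)$, evaluated at $v_\eta$. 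I would build the whole argument on two features of this picture. First, by concavity (L5) a concave function lies below each of its tangents, so $g_\eta(s,u)\le g_\eta(s,u_{\xi^*})+\partial_ug_\eta(s,u_{\xi^*})(u-u_{\xi^*})$ for every $u$. Second, when $\eta=\xi^*$ the two equations coincide: substituting $v=u_{\xi^*}$ into \eqref{eq:Cara2} reproduces $\dot u_{\xi^*}=g_{\xi^*}(s,u_{\xi^*})$ with the same initial value $u_0$, so by uniqueness $v_{\xi^*}\equiv u_{\xi^*}$, and in particular $v_{\xi^*}(t_2)=u_{\xi^*}(t_2)$.

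The technical heart will be a one-line comparison asserting that, for every fixed $\eta\in\Gamma^{t_1,t_2}_{x,y}$, one has $u_\eta(s)\le v_\eta(s)$ on $[t_1,t_2]$. To see it, I would apply the tangent inequality at $u=u_\eta(s)$, obtaining $\dot u_\eta=g_\eta(s,u_\eta)\le g_\eta(s,u_{\xi^*})+\partial_ug_\eta(s,u_{\xi^*})(u_\eta-u_{\xi^*})$, so that $u_\eta$ is a subsolution of the linear equation satisfied by $v_\eta$. Since $u_\eta(t_1)=v_\eta(t_1)=u_0$, the difference $w:=v_\eta-u_\eta$ satisfies $\dot w\ge a(s)w$ a.e. with $w(t_1)=0$, where $a(s):=L_u(s,\eta,\dot\eta,u_{\xi^*})$ is bounded by (L3). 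Multiplying by the integrating factor $e^{-\int_{t_1}^s a}$ makes $s\mapsto e^{-\int_{t_1}^s a}w(s)$ nondecreasing from the value $0$, whence $w\ge0$, i.e. $v_\eta\ge u_\eta$. (Well-posedness of \eqref{eq:Cara2} for each $\eta$ is automatic, being a scalar linear ODE with bounded coefficient and integrable source.)

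With these two facts in hand, both conclusions follow by sandwiching. Put $m:=\inf_{\xi}u_\xi(t_2)$; since $\xi^*$ is a minimizer, $u_{\xi^*}(t_2)=m$, and hence $v_{\xi^*}(t_2)=m$. The comparison gives $v_\eta(t_2)\ge u_\eta(t_2)\ge m$ for all $\eta$, so $\inf_\eta v_\eta(t_2)\ge m$; combined with $v_{\xi^*}(t_2)=m$ this yields $\inf_\eta v_\eta(t_2)=m$, which is (1) and at the same time shows $\xi^*\in\arg\min_\eta v_\eta(t_2)$. For the inclusion in (2), if $\bar\eta$ minimizes $v_\eta(t_2)$ then $m=v_{\bar\eta}(t_2)\ge u_{\bar\eta}(t_2)\ge m$ forces $u_{\bar\eta}(t_2)=m$, so $\bar\eta\in\arg\min_\xi u_\xi(t_2)$. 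Finally, integrating \eqref{eq:caratheodory_L} gives $u_\xi(t_2)-u_0=\int_{t_1}^{t_2}L\,ds$, so $\arg\min_\xi u_\xi(t_2)$ coincides with the minimizer set of $h_L(t_1,t_2,x,y,u_0)$; if $\xi^*$ is the unique such minimizer, the chain $\{\xi^*\}\subset\arg\min_\eta v_\eta(t_2)\subset\arg\min_\xi u_\xi(t_2)=\{\xi^*\}$ collapses to equality and forces both sets to be singletons.

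The main obstacle here is conceptual rather than computational: one has to notice that concavity in $u$ produces a one-sided comparison $u_\eta\le v_\eta$ that holds for all competitors, while equality is attained exactly along the reference curve $\xi^*$. This asymmetry is precisely what lets the linearized Carath\'eodory problem share the same infimum as the original one while keeping its minimizer set no larger. Once it is spotted, the comparison reduces to an elementary Gronwall/integrating-factor estimate and the rest is bookkeeping; the only points that need genuine care are that the differential (in)equalities hold merely almost everywhere and that (L3) supplies the boundedness making the integrating factor legitimate.
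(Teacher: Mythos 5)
Your proposal is correct and follows essentially the same route as the paper's own proof: the tangent-line inequality from concavity (L5) plus a Gronwall/integrating-factor argument gives the one-sided comparison $u_\eta\leqslant v_\eta$, the identity $v_{\xi^*}\equiv u_{\xi^*}$ follows from uniqueness for the linear equation along $\xi^*$, and the two facts are then combined by sandwiching to obtain both (1) and (2). The only differences are cosmetic (the paper writes the ODE for the difference $w=v_{\xi^*}-u_{\xi^*}$ rather than invoking uniqueness directly, and you spell out the $h_L$-minimizer identification and the singleton case slightly more explicitly).
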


\begin{proof}
For any $\xi\in\Gamma^{t_1,t_2}_{x,y}$, set $w_{\xi}=u_{\xi}-v_{\xi}$. Then, by concavity of $L$ with respect to $u$ we have that
\begin{align*}
	\dot{w}_{\xi}=&\,L(s,\xi,\dot{\xi},u_{\xi})-[L(s,\xi,\dot{\xi},u_{\xi^*})+L_u(s,\xi,\dot{\xi},u_{\xi^*})(v_{\xi}-u_{\xi^*})]\\
	\leqslant&\,[L_u(s,\xi,\dot{\xi},u_{\xi^*})(u_{\xi}-u_{\xi^*})]-[L_u(s,\xi,\dot{\xi},u_{\xi^*})(v_{\xi}-u_{\xi^*})]\\
	=&\,L_u(s,\xi,\dot{\xi},u_{\xi^*})w_{\xi}
\end{align*}
with $w_{\xi}(t_1)=0$. It follows that $w_{\xi}(s)\leqslant0$ for all $s\in[t_1,t_2]$. Therefore, $u_{\xi}(t_2)\leqslant v_{\xi}(t_2)$ for all $\xi\in\Gamma^{t_1,t_2}_{x,y}$. Thus,
\begin{equation}\label{eq:leq}
	\inf_{\xi\in\Gamma^{t_1,t_2}_{x,y}}u_{\xi}(t_2)\leqslant \inf_{\eta\in\Gamma^{t_1,t_2}_{x,y}}v_{\eta}(t_2).
\end{equation}

Now, set $\xi=\eta=\xi^*$ in \eqref{eq:caratheodory_L} and \eqref{eq:Cara2} respectively and $w=v_{\xi^*}-u_{\xi^*}$. Then
\begin{align*}
	\begin{cases}
		\dot{w}(s)=L_u(s,\xi^*(s),\dot{\xi}^*(s),u_{\xi^*}(s))w(s),\quad a.e.\ s\in[t_1,t_2]\\
		w(t_1)=0.
	\end{cases}
\end{align*}
This implies $w\equiv0$ on $[t_1,t_2]$. It follows that
\begin{equation}\label{eq:leq2}
	\inf_{\eta\in\Gamma^{t_1,t_2}_{x,y}}v_{\eta}(t_2)\leqslant v_{\xi^*}(t_2)=u_{\xi^*}(t)=\inf_{\xi\in\Gamma^{t_1,t_2}_{x,y}}u_{\xi}(t_2).
\end{equation}
This completes the proof of (1) together with \eqref{eq:leq}.

To see (2), we suppose $\eta'\in\arg\min_{\eta\in\Gamma^{t_1,t_2}_{x,y}}v_\eta(t_2)$. Then
\begin{align*}
	u_{\eta'}(t_2)\leqslant v_{\eta'}(t_2)=v_{\xi^*}(t_2)=u_{\xi^*}(t)=\inf_{\xi\in\Gamma^{t_1,t_2}_{x,y}}u_{\xi}(t_2)
\end{align*}
by \eqref{eq:leq2}. This implies $\eta'\in\arg\min_{\xi\in\Gamma^{t_1,t_2}_{x,y}}u_\xi(t_2)$. The combination of \eqref{eq:leq} and \eqref{eq:leq2} leads to the inclusion $\xi^*\in\arg\min_{\eta\in\Gamma^{t_1,t_2}_{x,y}}v_{\xi^*,\eta}(t_2)$.
\end{proof}

Fix $t_2>t_1$, $u_0\in\R$ and $x,y\in M$. Let $\xi^*$ be a minimizer for $h_L(t_1,t_2,x,y,u_0)$. In light of Lemma \ref{equiv1}, we define a new Lagrangian
\begin{equation}\label{eq:L_xi}
	\begin{split}
		L^{\xi^*}(s,x,v,u):=&\,L(s,x,v,u_{\xi^*}(s))+L_u(s,x,v,u_{\xi^*}(s))(u-u_{\xi^*}(s)).	\end{split}
\end{equation}

Now we can reformulate our results in Lemma \ref{equiv1}.

\begin{Pro}\label{equiv2}
Fix $t_2>t_1$, $u_0\in\R$ and $x,y\in M$. Let $\xi^*$ be a minimizer for $h_L(t_1,t_2,x,y,u_0)$. Then,
\begin{enumerate}[\rm (1)]
	\item $h_L(t_1,t_2,x,y,u_0)=h_{L^{\xi^*}}(t_1,t_2,x,y,u_0)$;
	\item If $\xi'$ is a minimizer for $h_{L^{\xi^*}}(t_1,t_2,x,y,u_0)$, then $\xi'$ is a minimizer for $h_L(t_1,t_2,x,y,u_0)$.
\end{enumerate}
\end{Pro}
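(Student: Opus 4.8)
The plan is to recognize that Proposition \ref{equiv2} is essentially a reformulation of Lemma \ref{equiv1}, once one identifies the auxiliary Carath\'eodory equation \eqref{eq:Cara2} as precisely the defining equation of the fundamental solution for the modified Lagrangian $L^{\xi^*}$ introduced in \eqref{eq:L_xi}. First I would verify this key identification: writing the Carath\'eodory equation \eqref{eq:caratheodory_L} for the Lagrangian $L^{\xi^*}$ along an arbitrary curve $\eta\in\Gamma^{t_1,t_2}_{x,y}$ and substituting \eqref{eq:L_xi}, the solution $w_\eta$ satisfies
\[
\dot{w}_\eta=L(s,\eta,\dot\eta,u_{\xi^*})+L_u(s,\eta,\dot\eta,u_{\xi^*})(w_\eta-u_{\xi^*}),\qquad w_\eta(t_1)=u_0,
\]
which is exactly \eqref{eq:Cara2}. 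By the uniqueness of solutions of the Carath\'eodory equation, $w_\eta=v_{\xi^*,\eta}$; that is, $v_{\xi^*,\eta}$ is precisely the $u$-function attached to $L^{\xi^*}$ along $\eta$, so that by \eqref{eq:fundamental_solution},
\[
h_{L^{\xi^*}}(t_1,t_2,x,y,u_0)=\inf_{\eta\in\Gamma^{t_1,t_2}_{x,y}}\int_{t_1}^{t_2}L^{\xi^*}(s,\eta,\dot\eta,v_{\xi^*,\eta})\,ds.
\]

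Next I would exploit the elementary fact that the running cost always integrates to the endpoint value of the associated $u$-function: integrating \eqref{eq:caratheodory_L} gives $\int_{t_1}^{t_2}L\,ds=u_\xi(t_2)-u_0$, and integrating \eqref{eq:Cara2} gives $\int_{t_1}^{t_2}L^{\xi^*}\,ds=v_{\xi^*,\eta}(t_2)-u_0$. Consequently, minimizing $h_L$ over $\Gamma^{t_1,t_2}_{x,y}$ amounts to minimizing $\xi\mapsto u_\xi(t_2)$, and minimizing $h_{L^{\xi^*}}$ amounts to minimizing $\eta\mapsto v_{\xi^*,\eta}(t_2)$. With these two reductions in hand, part (1) is immediate from Lemma \ref{equiv1}(1), since
\[
h_L=\inf_{\xi}u_\xi(t_2)-u_0=\inf_{\eta}v_{\xi^*,\eta}(t_2)-u_0=h_{L^{\xi^*}}.
\]
For part (2), a minimizer $\xi'$ for $h_{L^{\xi^*}}$ is, by the reduction just described, a minimizer of $\eta\mapsto v_{\xi^*,\eta}(t_2)$; the inclusion $\arg\min_\eta v_{\xi^*,\eta}(t_2)\subset\arg\min_\xi u_\xi(t_2)$ from Lemma \ref{equiv1}(2) then places $\xi'$ among the minimizers of $u_\xi(t_2)$, i.e. among the minimizers of $h_L$.

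Since all of the analytic content, namely the existence of minimizers and the comparison $u_\xi\leqslant v_\xi$ driven by the concavity of $L$ in $u$ (condition (L5)), is already packaged in Lemma \ref{equiv1}, I do not expect any genuine obstacle. The one point requiring care is purely a matter of bookkeeping: one must confirm that the definition of $h_{L^{\xi^*}}$ is formed through the Carath\'eodory equation for $L^{\xi^*}$, so that the correct auxiliary function is $v_{\xi^*,\eta}$ and not a spurious reuse of $u_{\xi^*}$. Once this identification is secured, both assertions are direct transcriptions of the corresponding parts of Lemma \ref{equiv1}.
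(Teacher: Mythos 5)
Your proof is correct and follows exactly the route the paper intends: the paper states the proposition as a direct reformulation of Lemma \ref{equiv1}, and your identification of \eqref{eq:Cara2} with the Carath\'eodory equation for $L^{\xi^*}$, together with the observation that $\int_{t_1}^{t_2}L\,ds=u_\xi(t_2)-u_0$ (and likewise for $L^{\xi^*}$), is precisely the bookkeeping that makes this reformulation rigorous. Nothing is missing.
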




\begin{The}[Representation formula III]\label{rep3}
Suppose $L$ satisfies condition \mbox{\rm (L1)-(L5)} and $H$ is the associated Hamiltonian. If $\phi$ is lower semi-continuous and $(\kappa_1,\kappa_2)$-Lipschitz in the large, then the unique viscosity solution $u$ of \eqref{eq:HJe} has the following representation: for any $t>0$ and $x\in M$, if $\xi^*\in\mathcal{A}_{t,x}$ be a minimal curve in the definition of $u(t,x)$ with $y^*=\xi^*(0)$, then
\begin{equation}\label{eq:rep3}
	u(t,x)=\inf_{\eta\in\Gamma^{0,t}_{y^*,x}}\left\{e^{\int^t_0L_ud\tau}\phi(y^*)+\int^t_0e^{\int^t_sL_ud\tau}\{L(s,\eta,\dot{\eta},u_{\xi^*})-u_{\xi^*}L_u\}\ ds\right\},
\end{equation}
where $L_u(s):=L_u(s,\eta(s),\dot{\eta}(s),u_{\xi^*}(s))$. Moreover, the right side of \eqref{eq:rep3} is independent of the choice of $\xi^*$.
\end{The}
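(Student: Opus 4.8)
The plan is to combine the fixed-endpoint reduction furnished by Lemma~\ref{equiv1} with an explicit integrating-factor solution of the linearized Carath\'eodory equation \eqref{eq:Cara2}. The key structural observation is that the modified Lagrangian $L^{\xi^*}$ defined in \eqref{eq:L_xi} is \emph{affine} in the $u$-variable; consequently \eqref{eq:Cara2} is a \emph{linear} first-order scalar ODE in $v_\eta$, which can be integrated in closed form by exactly the device used in the proof of Proposition~\ref{RF_II}.

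First I would pass from the free-endpoint functional defining $u(t,x)$ to a fixed-endpoint one. By Proposition~\ref{facts_viscosity_soluiton}(3), the minimal curve $\xi^*$ realizes $u(t,x)=\phi(y^*)+h_L(0,t,y^*,x,\phi(y^*))$ with $y^*=\xi^*(0)$, and in particular $\xi^*$ is a minimizer for $h_L(0,t,y^*,x,\phi(y^*))$. Since any solution $u_\xi$ of \eqref{eq:caratheodory_L} with $u_0=\phi(y^*)$ satisfies $u_\xi(t)=\phi(y^*)+\int_0^t L\,ds$, this identity rewrites as $u(t,x)=\inf_{\xi\in\Gamma^{0,t}_{y^*,x}}u_\xi(t)$. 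Applying Lemma~\ref{equiv1}(1) with $t_1=0$, $t_2=t$, endpoints $y^*,x$ and $u_0=\phi(y^*)$ then yields
\begin{align*}
u(t,x)=\inf_{\eta\in\Gamma^{0,t}_{y^*,x}}v_\eta(t),
\end{align*}
where $v_\eta=v_{\xi^*,\eta}$ is governed by \eqref{eq:Cara2}.

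Next I would solve \eqref{eq:Cara2} explicitly. Writing $L_u(s)=L_u(s,\eta(s),\dot\eta(s),u_{\xi^*}(s))$, the equation rearranges to $\dot v_\eta-L_u\,v_\eta=L(s,\eta,\dot\eta,u_{\xi^*})-u_{\xi^*}L_u$; multiplying by the integrating factor $e^{-\int_0^sL_u\,d\tau}$ gives $\frac{d}{ds}\big(e^{-\int_0^sL_u\,d\tau}v_\eta\big)=e^{-\int_0^sL_u\,d\tau}(L-u_{\xi^*}L_u)$, precisely as in Proposition~\ref{RF_II}. Integrating from $0$ to $t$ with $v_\eta(0)=\phi(y^*)$ produces
\begin{align*}
v_\eta(t)=e^{\int_0^tL_u\,d\tau}\phi(y^*)+\int_0^t e^{\int_s^tL_u\,d\tau}\{L(s,\eta,\dot\eta,u_{\xi^*})-u_{\xi^*}L_u\}\,ds,
\end{align*}
which is exactly the quantity inside the infimum in \eqref{eq:rep3}. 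Taking the infimum over $\eta\in\Gamma^{0,t}_{y^*,x}$ and combining with the previous display establishes \eqref{eq:rep3}.

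For the last assertion, independence of the right-hand side from $\xi^*$ is immediate: the chain of identities above holds for \emph{every} minimal curve $\xi^*$ (with its attached base point $y^*$), and in each case evaluates the right side of \eqref{eq:rep3} to the single number $u(t,x)$, which makes no reference to $\xi^*$. I do not anticipate a serious obstacle, since the analytic heavy lifting is already contained in Lemma~\ref{equiv1} and Proposition~\ref{equiv2}; the only points demanding care are the bookkeeping in the fixed-endpoint reduction (checking that a minimizer of the free-endpoint problem restricts to a minimizer of $u_\xi(t)$ over $\Gamma^{0,t}_{y^*,x}$, so that Lemma~\ref{equiv1} applies with the correct data) and the verification that the integrating-factor computation reproduces the stated exponential weights. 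The conceptual crux throughout is the affineness of $L^{\xi^*}$ in $u$, which is what renders \eqref{eq:Cara2} linear and hence explicitly solvable.
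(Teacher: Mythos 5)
Your proposal is correct and follows essentially the same route as the paper: reduce to the fixed-endpoint problem at $y^*$, invoke Lemma~\ref{equiv1}(1) to replace $\inf u_\xi(t)$ by $\inf v_\eta(t)$, and solve the linear equation \eqref{eq:Cara2} by the integrating factor $e^{-\int_0^s L_u\,d\tau}$ to obtain the explicit formula. Your treatment is in fact slightly more explicit than the paper's on two points the paper leaves implicit --- the verification that $\xi^*$ minimizes $u_\xi(t)$ over $\Gamma^{0,t}_{y^*,x}$ (so that Lemma~\ref{equiv1} applies with the correct data) and the independence of the right-hand side from the choice of $\xi^*$.
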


\begin{proof}
Let $t>0$ and $x\in M$ and $\xi^*\in\mathcal{A}_{t,x}$ be a minimal curve in the definition of $u(t,x)$ with $y^*=\xi^*(0)$. Consider the Carath\'eodory equation respect to $L^{\xi^*}$. That is
\begin{equation}\label{eq:Cara3}
	\begin{cases}
		\dot{v}_{\eta}=L(s,\eta,\dot{\eta},u_{\xi^*})+L_u(s,\eta,\dot{\eta},u_{\xi^*})(v_{\eta}-u_{\xi^*}),\quad a.e.\ s\in[0,t]\\
		v_{\eta}(0)=\phi(y^*),
	\end{cases}
\end{equation}
where $\eta\in\Gamma^{0,t}_{y^*,x}$. By solving \eqref{eq:Cara3} we have
\begin{align*}
	v_{\eta}(t)=e^{\int^t_0L_ud\tau}\phi(\eta(0))+\int^t_0e^{\int^t_sL_ud\tau}\{L(s,\eta,\dot{\eta},u_{\xi^*})-u_{\xi^*}L_u\}\ ds.
\end{align*}
Invoking Lemma \ref{equiv1}, we have that
\begin{align*}
	u(t,x)=\inf_{\xi\in\Gamma^{0,t}_{y^*,x}}u_{\xi}(t)=\inf_{\eta\in\Gamma^{0,t}_{y^*,x}}v_{\eta}(t).
\end{align*}
This leads to \eqref{eq:rep3}.
\end{proof}

%
 
\section{representation formula of stationary equation}

In this section, we will study the representation formula of the unique viscosity solution of \eqref{eq:intro_HJs} with $L$ time-independent. Fix $ x\in M$ and $t>0$, denote the set
\begin{align*}
	\mathcal{A}^*_{t,x}=&\,\{\xi\in W^{1,1}([-t,0],M): \xi(0)=x\},\\
	\mathcal{A}^*_{\infty,x}=&\,\{\xi\in W^{1,1}((-\infty,0],M): \xi(0)=x\}.
\end{align*}
Suppose $u$ is the unique viscosity solution of \eqref{eq:intro_HJs}. For any $\xi\in\mathcal{A}^*_{t,x}$ we consider the Carath\'eodory equation
\begin{equation}\label{eq:Caratheodory}
	\begin{cases}
		\dot{u}_{\xi}(s)=L(\xi(s),\dot{\xi}(s),u_{\xi}(s)),\quad \text{a.e.}\ s\in[-t,0],&\\
		u_{\xi}(-t)=u(\xi(-t)).&
	\end{cases}
\end{equation}
We know the viscosity solution $u$ satisfies that property that $u(x)=(\mathbf{T}^t_0u)(x)=u(t,x)$ for all $t\geqslant 0$. Then, we rewrite $u$ as
\begin{equation}\label{Hu_inf}
	\begin{split}
		u(x)=&\,\inf_{\xi\in\mathcal{A}^*_{t,x}}\left\{u(\xi(-t))+\int_{-t}^0L(\xi(s),\dot{\xi}(s),u_{\xi}(s))ds\right\}\\
		=&\,\inf_{y\in M}\left\{u(y)+h_L(t,y,x,u(y))\right\}
	\end{split}
\end{equation}
where $u_{\xi}$ is uniquely determined by \eqref{eq:Caratheodory}. It is known that the infimum in \eqref{Hu_inf} can be achieved.

\begin{The}[Representation formula IV]\label{rep4}
Suppose $L$ satisfies condition \mbox{\rm (L1)-(L3)} and \mbox{\rm (L6)} and $H$ is the associated Hamiltonian, and \eqref{eq:intro_HJs} has a Lipschitz viscosity solution $u(x)$, then the following representation formula holds
\begin{equation}\label{Hu_repre}
u(x)=\inf_{\xi\in\mathcal{A}^*_{\infty,x}}\int_{-\infty}^{0}e^{\int_{s}^{0}L_{u}(\xi,\dot{\xi}, u_{\xi})d\tau}(L(\xi,\dot{\xi},u_\xi)-u_\xi\cdot L_{u}(\xi,\dot{\xi}, u_{\xi}))ds,
\end{equation}
where $u_{\xi}$ satisfies \eqref{eq:Caratheodory} with $u_\xi(0)=u(\xi(0))=u(x)$ for all $t>0$. Moreover, the infimum in \eqref{Hu_repre} can be achieved.
\end{The}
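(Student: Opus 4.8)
The plan is to run the integrating-factor computation of Proposition \ref{RF_II} on the backward interval $[-t,0]$, but now normalized at the \emph{terminal} time, and then to take $t\to\infty$. Fix $\xi\in\mathcal{A}^*_{\infty,x}$ and let $u_\xi$ solve \eqref{eq:Caratheodory} with $u_\xi(0)=u(\xi(0))=u(x)$. Subtracting $u_\xi L_u$ from both sides of $\dot u_\xi=L$ and multiplying by $e^{\int_s^0 L_u\,d\tau}$ gives
\[
\frac{d}{ds}\Big(e^{\int_s^0 L_u\,d\tau}\,u_\xi(s)\Big)=e^{\int_s^0 L_u\,d\tau}\big(L-u_\xi L_u\big),\qquad L_u=L_u(\xi,\dot\xi,u_\xi),
\]
and integrating from $-t$ to $0$ yields the \emph{exact} identity
\[
\int_{-t}^0 e^{\int_s^0 L_u\,d\tau}\big(L-u_\xi L_u\big)\,ds \;=\; u(x)-e^{\int_{-t}^0 L_u\,d\tau}\,u_\xi(-t).
\]
No infimum is involved here. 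Consequently the whole theorem reduces to a single statement about the boundary term: the functional in \eqref{Hu_repre} satisfies $J(\xi)=u(x)-\lim_{t\to\infty}e^{\int_{-t}^0 L_u\,d\tau}\,u_\xi(-t)$, so I must show this limit is $\le 0$ for every $\xi$, and $=0$ for at least one $\xi$.

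For the upper bound and attainment I would produce a global calibrated curve. For each $t>0$ the infimum in \eqref{Hu_inf} is attained by some minimizer $\xi_t\in\mathcal{A}^*_{t,x}$, which is calibrated in the sense that $u_{\xi_t}(s)=u(\xi_t(s))$ on $[-t,0]$. The coercivity (L2) together with the assumed Lipschitz bound on $u$ gives a uniform a priori bound on the speeds of these minimizers, so by Arzel\`a--Ascoli and a diagonal extraction one obtains a limiting curve $\xi^*\in\mathcal{A}^*_{\infty,x}$ that is calibrated on all of $(-\infty,0]$. Its orbit $(\xi^*,\dot\xi^*,u_{\xi^*})$ then stays in a compact subset of $TM\times\R$, so by continuity and (L6) there is $\delta>0$ with $L_u\le-\delta$ along it; hence $e^{\int_{-t}^0 L_u\,d\tau}\le e^{-\delta t}\to 0$ while $u_{\xi^*}(-t)=u(\xi^*(-t))$ stays bounded, the boundary term vanishes, and the exact identity gives $J(\xi^*)=u(x)$. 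This shows the infimum is $\le u(x)$ and is achieved.

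For the lower bound I would first record the comparison $u_\xi(s)\le u(\xi(s))$ for all $s\le 0$: the inequality half of the fixed-point identity \eqref{Hu_inf}, applied to $\xi|_{[s,0]}$, together with uniqueness for the scalar ODE \eqref{eq:Caratheodory} (solutions cannot cross), forces it; in particular $u_\xi(-t)\le\max_M u$. It then remains to prove $\limsup_{t\to\infty}e^{\int_{-t}^0 L_u\,d\tau}u_\xi(-t)\le 0$, which I would do by a dichotomy. If $u_\xi$ is unbounded, then being bounded above it has $u_\xi(-t)\to-\infty$ along a subsequence and the boundary term is eventually nonpositive. If $u_\xi$ stays bounded, then $\int_{-t}^0 L\,ds=u(x)-u_\xi(-t)$ is bounded, which by the coercive lower bound $L\ge\theta_0(|\dot\xi|)-c_0-K|u_\xi|$ of (L2)--(L3) forces $\theta_0(|\dot\xi|)$ to be bounded in average; hence on a set of $s$ of positive lower density the orbit lies in a fixed compact set where $L_u\le-\delta$, so $\int_{-\infty}^0|L_u|\,d\tau=\infty$ and again $e^{\int_{-t}^0 L_u\,d\tau}\to 0$. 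In either case $\limsup_t e^{\int_{-t}^0 L_u\,d\tau}u_\xi(-t)\le 0$, and the exact identity gives $\liminf_t\int_{-t}^0\ge u(x)$, i.e. $J(\xi)\ge u(x)$. Combined with the previous paragraph this proves \eqref{Hu_repre} with the infimum attained.

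The routine parts (the integrating factor and the comparison $u_\xi\le u\circ\xi$) are immediate; the two genuine obstacles are both compactness phenomena driven by (L2) and (L6). The first is the a priori speed bound that lets the finite-horizon minimizers converge to a global calibrated curve. The second, and the more delicate, is the boundary-term analysis in the lower bound: one must turn the finiteness of $\int L$ into an averaged velocity bound and then combine it with the strict sign (L6) on compact sets to conclude $\int_{-\infty}^0|L_u|=\infty$ whenever $u_\xi$ stays bounded. This averaging step is where I expect the main work to lie, since (L3) and (L6) only give $-K\le L_u<0$ pointwise and do not by themselves bound $L_u$ away from $0$.
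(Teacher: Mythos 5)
Your proposal is correct, and it is worth separating its two halves, because they relate to the paper's own proof differently. The attainment half is essentially the paper's argument: the paper also takes finite-horizon minimizers $\xi_t$ of \eqref{Hu_inf}, which are calibrated ($u_{\xi_t}=u\circ\xi_t$, so the initial and terminal normalizations coincide), rewrites the identity via the integrating factor as in \eqref{eq:cara4}, extracts an Arzel\`a--Ascoli limit curve on $(-\infty,0]$, uses the uniform bounds on $(\xi_t,\dot{\xi}_t,u_{\xi_t})$ together with (L6) and compactness to get $-K\leqslant L_u\leqslant-\delta<0$ along the approximating orbits, and passes to the limit --- exactly your second paragraph. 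The lower-bound half is where you genuinely go beyond the paper: the published proof never shows that the functional is $\geqslant u(x)$ for an \emph{arbitrary} $\xi\in\mathcal{A}^*_{\infty,x}$; after computing the value of the limit curve it simply asserts that \eqref{Hu_repre} holds. This step is not automatic, because the functional in \eqref{Hu_repre} normalizes $u_\xi$ at the terminal time ($u_\xi(0)=u(x)$), whereas the inequality contained in \eqref{Hu_inf} pins $u_\xi$ at the initial time ($u_\xi(-t)=u(\xi(-t))$); your comparison $u_\xi\leqslant u\circ\xi$, obtained from non-crossing of solutions of the scalar ODE \eqref{eq:Caratheodory} (uniqueness being guaranteed by (L3)), is precisely what bridges the two normalizations, and your dichotomy --- converting finiteness of $\int L$ into an averaged speed bound, hence a positive-density sojourn of the orbit in a compact set where $L_u\leqslant-\delta$, hence $\int_{-\infty}^0 L_u\,d\tau=-\infty$ --- is what kills the boundary term. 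Neither ingredient appears in the paper, so on this point your argument is more complete than the published one. One bookkeeping caveat: in the case where $u_\xi$ is unbounded below you control the boundary term only along a subsequence, which yields $\limsup_t J_t\geqslant u(x)$ rather than $\liminf_t J_t\geqslant u(x)$; this suffices exactly when the improper integral in \eqref{Hu_repre} exists as a limit, i.e.\ whenever the functional is well defined, and you should state this explicitly.
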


\begin{Rem}
From \eqref{eq:rep2} it is obvious to see that if $L_u$ satisfies a more restricted condition such that $-K\leqslant L_u\leqslant-\delta<0$, then one can see the term $e^{\int_{-t}^0 L_u} u_{\xi}(0)$ in \eqref{eq:rep2} vanishes when $t\to\infty$. However, if only our assumption (L3) is supposed, we need some a priori estimate to ensure the existence of such a positive $\delta$ for the solution of \eqref{eq:intro_HJs}.

For any $x\in M$, if $\xi^*=\xi^*_x\in\mathcal{A}^*_{\infty,x}$ is such a minimizer, we call $\xi^*$ a backward calibrated curve from $x$.
\end{Rem}

\begin{proof}
Under our assumptions, it is well known that the viscosity solution of \eqref{eq:intro_HJs} is unique. Recall that $0$ on the right side of \eqref{eq:intro_HJs} is a critical value.  

Now, let $\xi_t\in\mathcal{A}^*_{t,x}$ be a minimizer for \eqref{Hu_inf}, then the viscosity solution $u$ of \eqref{eq:intro_HJs} has the following representation formula
\begin{equation}\label{eq:cara4}
	\begin{split}
		u(x)&=u(\xi_t(-t))+\int^0_{-t}L\big(\xi_t(s),\dot{\xi}_t(s),u_{\xi_t}(s)\big)	\,ds\\
	&=e^{\int^0_{-t}L_u d\tau}u(\xi_t(-t))+\int^0_{-t}e^{\int^0_s L_u\ d\tau}(L-u_{\xi_t}\cdot L_u)\,ds
	\end{split}
\end{equation}
where $u_{\xi_t}=u(\xi_t(s))$ satisfies the associated Carath\'eodory equation \eqref{eq:Caratheodory} with the initial condition $u_{\xi_t}(-t)=u(\xi_t(-t))$ for all $t\geqslant 0$.

It is clear that, $u_{\xi_t}(s)=u(\xi_t(s))$ and $Du(\xi_t(s))$, $s\in[-t,0]$ are uniformly bounded and $\{(\xi_t, \dot{\xi}_t)\}_{t>0}$ is uniformly bounded (see, for instance, \cite{CCJWY2019}). Invoking Ascoli-Arzela theorem, let $t\rightarrow\infty$, we can find a subsequence $\{\xi_{t_k}\}$ uniformly converges, on any compact subinterval of $(-\infty,0]$, to a Lipschitz curve $\xi:(-\infty,0]\to M$.  Let $u_{\xi}(s):=u(\xi(s)):(-\infty,0]\to\R$, then $u_{\xi_{t_k}}$ uniformly converges to $u_{\xi}$ on any compact interval of $(-\infty,0]$.

Noticing the facts that $\dot{\xi}_k$ and $u_{\xi_k}$ are uniformly bounded,  we can find a $\delta>0$ such that $-K\leqslant L_u(\xi_k,\dot{\xi}_k,u_{\xi_k})\leqslant-\delta<0$. Therefore, by applying the dominated convergence theorem and the equality in \eqref{eq:cara4}, the following representation formula holds
\begin{align*}
	u(x)=\int^0_{-\infty}e^{\int^0_s L_u(\xi,\dot{\xi},u_{\xi})\ d\tau}(L(\xi,\dot{\xi},u_{\xi})-u_{\xi}\cdot L_u(\xi,\dot{\xi},u_{\xi}))\ ds.
\end{align*}
and $u_{\xi}(s)=u(\xi(s))$ for all $s\in(-\infty,0]$. Furthermore,  the representation formula \eqref{Hu_repre} holds. This completes the proof.
\end{proof}

\begin{Cor}[Representation formula V]\label{rep5}
Suppose $L$ satisfies condition \mbox{\rm (L1)-(L6)} and $H$ is the associated Hamiltonian. Then the unique viscosity solution $u$ of \eqref{eq:intro_HJs} has the following representation formula: let $x\in M$ and $\xi^*\in\mathcal{A}^*_{\infty,x}$ be a backward calibrated curve from $x$ with $u_{\xi^*}$ satisfying \eqref{eq:Caratheodory} with respect to $\xi^*$ for all $t>0$, then
\begin{equation}\label{Hu_repre1}
u(x)=\inf_{\xi\in\mathcal{A}^*_{\infty,x}}\int^0_{-\infty}e^{\int^0_s L_u(\xi,\dot{\xi},u_{\xi^*})\ d\tau}(L(\xi,\dot{\xi},u_{\xi^*})-u_{\xi^*}L_u(\xi,\dot{\xi},u_{\xi^*}))\ ds.
\end{equation}
Moreover, the infimum in \eqref{Hu_repre1} can be achieved.
\end{Cor}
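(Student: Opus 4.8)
The plan is to transplant the finite-horizon linearization of Theorem \ref{rep3} onto each interval $[-t,0]$ and then pass to the limit $t\to\infty$ exactly as in the proof of Theorem \ref{rep4}. Fix $x\in M$ and a backward calibrated curve $\xi^*\in\mathcal{A}^*_{\infty,x}$; its existence, together with the uniform bounds on $\xi^*,\dot{\xi}^*,u_{\xi^*}$ and the strict inequality $-K\leqslant L_u(\xi^*,\dot{\xi}^*,u_{\xi^*})\leqslant-\delta<0$, is furnished by Theorem \ref{rep4}. For each $t>0$ set $y_t=\xi^*(-t)$. Calibration means precisely that $\xi^*|_{[-t,0]}$ is a minimizer for $h_L(t,y_t,x,u(y_t))$, so Lemma \ref{equiv1} and Proposition \ref{equiv2}, applied on $[-t,0]$ with the frozen Lagrangian $L^{\xi^*}$ of \eqref{eq:L_xi}, apply verbatim.

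Concretely, I would solve the affine Carath\'eodory equation $\dot{v}_{\eta}=L(\eta,\dot{\eta},u_{\xi^*})+L_u(\eta,\dot{\eta},u_{\xi^*})(v_{\eta}-u_{\xi^*})$ with $v_{\eta}(-t)=u(\eta(-t))$ by the integrating factor $e^{-\int_{-t}^s L_u\,d\tau}$, just as in Theorem \ref{rep3}, obtaining
\begin{equation*}
v_{\eta}(0)=e^{\int_{-t}^0 L_u\,d\tau}u(\eta(-t))+\int_{-t}^0 e^{\int_s^0 L_u\,d\tau}\bigl(L(\eta,\dot{\eta},u_{\xi^*})-u_{\xi^*}L_u\bigr)\,ds ,
\end{equation*}
where $L_u=L_u(\eta(s),\dot{\eta}(s),u_{\xi^*}(s))$. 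Evaluating along $\eta=\xi^*$, the frozen and the genuine profiles coincide, so $v_{\xi^*}(0)=u(x)$ by calibration; since $L_u\leqslant-\delta$ along $\xi^*$, the boundary term is dominated by $e^{-\delta t}\|u\|_{\infty}\to0$, and dominated convergence (the weight being majorized by the integrable $e^{\delta s}$) gives $u(x)=J^{\xi^*}[\xi^*]$, where $J^{\xi^*}[\cdot]$ denotes the functional on the right of \eqref{Hu_repre1}. This already yields the upper bound $\inf_{\xi\in\mathcal{A}^*_{\infty,x}}J^{\xi^*}[\xi]\leqslant u(x)$ and exhibits $\xi^*$ as a minimizer.

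For the lower bound I would use the pointwise comparison embedded in Lemma \ref{equiv1}. The crucial observation is that the concavity computation there is local in $s$: for an arbitrary competitor $\eta\in\mathcal{A}^*_{t,x}$ (whose left endpoint need not equal $y_t$), anchoring both the genuine profile $u_\eta$ and the linearized profile $v_\eta$ at $u(\eta(-t))$ gives $w_\eta=u_\eta-v_\eta$ with $w_\eta(-t)=0$ and $\dot w_\eta\leqslant L_u(\eta,\dot\eta,u_{\xi^*})w_\eta$ by (L5), hence $u_\eta(0)\leqslant v_\eta(0)$. Combining this with the Lax--Oleinik inequality $u(x)=(\mathbf{T}^t_0u)(x)\leqslant u_\eta(0)$ yields $v_\eta(0)\geqslant u(x)$ for every $t$, and rearranging the display above gives
\begin{equation*}
\int_{-t}^0 e^{\int_s^0 L_u\,d\tau}\bigl(L(\eta,\dot{\eta},u_{\xi^*})-u_{\xi^*}L_u\bigr)\,ds\geqslant u(x)-e^{\int_{-t}^0 L_u\,d\tau}u(\eta(-t)).
\end{equation*}
Letting $t\to\infty$ should produce $J^{\xi^*}[\eta]\geqslant u(x)$, which together with the previous paragraph closes the proof and gives independence of the right-hand side from $\xi^*$ (its value being $u(x)$).

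The hard part will be controlling the boundary term $e^{\int_{-t}^0 L_u\,d\tau}u(\eta(-t))$ for a \emph{general} competitor as $t\to\infty$. Along $\xi^*$ it decays thanks to the a priori bound $L_u\leqslant-\delta$, but (L6) alone forces only $L_u<0$ and (L3) only $L_u\geqslant-K$, so for an arbitrary $\eta$ the exponent need not tend to $-\infty$ and the boundary need not vanish. I expect to circumvent this by superlinearity: since $L(\eta,\dot\eta,u_{\xi^*})-u_{\xi^*}L_u\geqslant\theta_0(|\dot\eta|)-C$ by (L2)--(L3) while the weight lies in $(0,1]$, any curve along which $|\dot\eta|$ does not decay as $s\to-\infty$ has $J^{\xi^*}[\eta]=+\infty$ and is harmless, whereas for the remaining competitors one finds a sequence $t_k\to\infty$ on which the velocity, hence $|L_u|$, is bounded away from zero, forcing the boundary term to vanish along $t_k$ and delivering $J^{\xi^*}[\eta]\geqslant u(x)$. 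This is the only step where the infinite-horizon passage genuinely goes beyond the finite-horizon identity of Theorem \ref{rep3}.
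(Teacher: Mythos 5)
Your overall architecture is sound, and it supplies exactly the details that the paper compresses into its one-line proof (``direct from Theorem \ref{rep4} and Theorem \ref{rep3}''): the upper bound via the identity $J^{\xi^*}[\xi^*]=u(x)$, using the a priori bound $L_u\leqslant-\delta$ along the calibrated curve established in the proof of Theorem \ref{rep4}; and, for each finite $t$, the chain $u(x)\leqslant u_\eta(0)\leqslant v_\eta(0)$ obtained by combining the dynamic programming inequality with the concavity comparison of Lemma \ref{equiv1}, together with your correct observation that this comparison is local in $s$ and does not require the competitor to share the left endpoint of $\xi^*$.

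The gap is in your final paragraph, i.e.\ precisely at the step you identify as the crux, and your proposed dichotomy fails in both directions. First, a curve whose speed does not decay can perfectly well have $J^{\xi^*}[\eta]<+\infty$: take the discounted Lagrangian $L=L_0(x,v)-\lambda u$, $\lambda>0$ (which satisfies (L1)--(L6)); then $L-u_{\xi^*}L_u=L_0(\eta,\dot\eta)$ and the weight is $e^{\lambda s}$, so any unit-speed curve has finite $J^{\xi^*}[\eta]$ even though its velocity never decays. Second, for the remaining competitors, knowing that $|L_u|$ is bounded away from zero at a \emph{sequence} of times $t_k$ does not force $\int_{-t_k}^0L_u\,d\tau\to-\infty$: the set where $|L_u|\geqslant\delta$ must have infinite measure, not merely contain a sequence; moreover it is \emph{boundedness} of the velocity (via compactness of $M\times\{|u|\leqslant\|u_{\xi^*}\|_\infty\}$ and (L6)), not its being bounded away from zero, that yields such a lower bound on $|L_u|$. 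The correct split is on the exponent itself. Either $\int_{-\infty}^0L_u(\xi,\dot\xi,u_{\xi^*})\,d\tau=-\infty$, in which case the boundary term $e^{\int_{-t}^0L_u\,d\tau}u(\xi(-t))$ vanishes (as $u$ is bounded) and your finite-$t$ inequality passes to the limit, giving $J^{\xi^*}[\xi]\geqslant u(x)$. Or the integral converges to some $A_\infty>-\infty$, in which case (i) the weight $e^{\int_s^0L_u\,d\tau}$ is bounded below by $e^{A_\infty}>0$ uniformly in $s$, and (ii) for every $R$ the set $E_R=\{s\leqslant0:|\dot\xi(s)|\leqslant R\}$ has finite measure, since on $E_R$ one has $|L_u|\geqslant\delta_R>0$ by (L6), continuity of $L_u$, and compactness; then (L2)--(L3) give $L-u_{\xi^*}L_u\geqslant\theta_0(|\dot\xi|)-C$, which is bounded below by a positive constant off the finite-measure set $E_R$ for $R$ large, so the partial integrals diverge and $J^{\xi^*}[\xi]=+\infty$, making such curves harmless. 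With this repair your argument closes; as written, the case analysis misclassifies the competitors that actually matter.
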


\begin{proof}
	The conclusion is direct from Theorem \ref{rep4} and Theorem \ref{rep3}.
\end{proof}

\section{Concluding remarks}

\subsection{Discounted system as an example}

Throughout this section we set
\begin{equation}\label{eq:discount_L}
	L(x,v,u)=L_0(x,v)-\lambda u
\end{equation}
with $\lambda\in\R$, where $L_0$ is a Tonelli Lagrangian on $TM$. Let $H_0$ be the associated Hamiltonian with respect to $L_0$. 

\begin{Cor}
Let $L$ be a discounted Lagrangian defined in \eqref{eq:discount_L} with $\lambda\in\R$ and $H$ be the associated Hamiltonian. Then the unique viscosity solution $u$ of \eqref{eq:HJe} has the following representation:
\begin{align*}
	u(t,x)=\inf_{\xi}\left\{e^{-\lambda t}\phi(\xi(-t))+\int^0_{-t}e^{\lambda s}L_0(\xi(s),\dot{\xi}(s))\ ds\right\}
\end{align*}
where the infimum is taken over the set of the absolutely continuous curve $\xi:[-t,0]\to M$ such that $\xi(0)=x$.
\end{Cor}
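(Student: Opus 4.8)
The plan is to specialize Representation formula II (Proposition~\ref{RF_II}) to the discounted Lagrangian and then reparametrize the time variable so as to match the stated formula. First I would observe that for $L(x,v,u)=L_0(x,v)-\lambda u$ one has $L_u(s,x,v,u)\equiv-\lambda$, a constant independent of all arguments. I would also check that $L$ satisfies \mbox{\rm (L1)--(L4)}, so that Proposition~\ref{RF_II} indeed applies: strict convexity in $v$ (L1) and the superlinear lower bound (L2) are inherited from $L_0$ being Tonelli; (L3) holds with $K=|\lambda|$; and (L4) holds trivially since $L$ is time-independent.

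Next I would substitute $L_u\equiv-\lambda$ into \eqref{eq:rep2}. Because $L_u$ is constant, $\int_0^t L_u\,d\tau=-\lambda t$ and $\int_s^t L_u\,d\tau=-\lambda(t-s)$, so the two exponential weights collapse to $e^{-\lambda t}$ and $e^{-\lambda(t-s)}$. The crucial simplification is that the $u_\xi$-dependence cancels in the integrand: $L-u_\xi L_u=\bigl(L_0(\xi,\dot\xi)-\lambda u_\xi\bigr)-u_\xi(-\lambda)=L_0(\xi,\dot\xi)$. Thus \eqref{eq:rep2} reduces to
\[
u(t,x)=\inf_{\xi\in\mathcal{A}_{t,x}}\left\{e^{-\lambda t}\phi(\xi(0))+\int_0^t e^{-\lambda(t-s)}L_0(\xi(s),\dot\xi(s))\,ds\right\},
\]
in which no auxiliary Carath\'eodory equation survives; the formula has become fully explicit.

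Finally I would carry out the change of variable $s\mapsto\sigma=s-t$, equivalently replacing each $\xi\in\mathcal{A}_{t,x}$ (defined on $[0,t]$ with $\xi(t)=x$) by its shift $\tilde\xi(\sigma)=\xi(\sigma+t)$ on $[-t,0]$, which satisfies $\tilde\xi(0)=\xi(t)=x$ and $\tilde\xi(-t)=\xi(0)$. Under this shift $\phi(\xi(0))=\phi(\tilde\xi(-t))$, and $e^{-\lambda(t-s)}\,ds$ transforms into $e^{\lambda\sigma}\,d\sigma$, so that after renaming $\tilde\xi$ back to $\xi$ one recovers exactly the asserted expression, with the infimum now ranging over absolutely continuous $\xi:[-t,0]\to M$ with $\xi(0)=x$. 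I do not expect a serious obstacle here: the statement is a direct corollary of Proposition~\ref{RF_II}, and the only points requiring care are the verification of \mbox{\rm (L1)--(L4)} and the bookkeeping of the time reparametrization, both of which are routine.
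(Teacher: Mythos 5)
Your proposal is correct and follows essentially the same route as the paper: specialize Proposition~\ref{RF_II}, use that $L_u\equiv-\lambda$ so the integrand collapses to $L_0$ and the weights become $e^{-\lambda t}$ and $e^{\lambda(s-t)}$, then shift time by $\tau=s-t$, $\eta(\tau)=\xi(\tau+t)$. Your explicit verification of \mbox{\rm (L1)--(L4)} is a small addition the paper leaves implicit, but otherwise the arguments coincide.
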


\begin{proof}
Applying Proposition \ref{RF_II}, we have
\begin{equation}\label{eq:inf_conv}
	u(t,x)=\inf_{\xi\in\mathcal{A}_{t,x}}\left\{e^{-\lambda t}\phi(\xi(0))+\int^t_0e^{\lambda(s-t)}L_0(\xi(s),\dot{\xi}(s))\ ds\right\}.
\end{equation}
By using the variable-changing $\tau=s-t$ and $\eta(\tau)=\xi(\tau+t)$, we complete our proof.
\end{proof}

A consequence of Theorem \ref{rep4} leads to the following result. See also \cite{DFIZ2016}.

\begin{Cor}
Let $L$ be a discounted Lagrangian defined in \eqref{eq:discount_L} with $\lambda>0$ and $H$ be the associated Hamiltonian. Then the unique viscosity solution $u$ of \eqref{eq:intro_HJs} has the following representation:
\begin{align*}
	u(x)=\inf_{\xi}\left\{\int^0_{-\infty}e^{\lambda s}L_0(\xi(s),\dot{\xi}(s))\ ds\right\}
\end{align*}
where the infimum is taken over the set of the curve $\xi:(-\infty,0]\to M$, which is absolutely continuous on each compact interval of $(-\infty,0]$,such that $\xi(0)=x$.
\end{Cor}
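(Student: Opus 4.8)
The plan is to read off this formula as the specialization of Theorem~\ref{rep4} to the discounted Lagrangian, so that the whole argument is a verification of hypotheses followed by one substitution. First I would confirm that $L(x,v,u)=L_0(x,v)-\lambda u$ satisfies the conditions \mbox{\rm (L1)--(L3)} and \mbox{\rm (L6)} demanded by Theorem~\ref{rep4}. Strict convexity of $v\mapsto L(x,v,u)$ is inherited directly from the Tonelli Lagrangian $L_0$, giving (L1); and since $L(x,v,0)=L_0(x,v)$, the superlinear lower bound (L2) is just the superlinearity of $L_0$. The key structural feature is that $L_u\equiv-\lambda$ is a negative constant, which at once yields (L3) with $K=\lambda$ and (L6) from the assumption $\lambda>0$.

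With the hypotheses verified, I would substitute $L_u\equiv-\lambda$ into \eqref{Hu_repre}. Two short computations do all the work: the exponential weight becomes
\begin{align*}
	e^{\int_s^0 L_u\,d\tau}=e^{\int_s^0(-\lambda)\,d\tau}=e^{\lambda s},
\end{align*}
and, because the $u_\xi$ contributions cancel exactly, the integrand collapses,
\begin{align*}
	L(\xi,\dot\xi,u_\xi)-u_\xi\cdot L_u(\xi,\dot\xi,u_\xi)
	&=\bigl(L_0(\xi,\dot\xi)-\lambda u_\xi\bigr)+\lambda u_\xi\\
	&=L_0(\xi,\dot\xi).
\end{align*}
Feeding both into \eqref{Hu_repre} gives exactly the claimed infimum over $\mathcal{A}^*_{\infty,x}$, whose elements are precisely the curves $\xi\colon(-\infty,0]\to M$ absolutely continuous on each compact subinterval with $\xi(0)=x$; the attainment of the infimum then transfers verbatim from Theorem~\ref{rep4}.

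The single hypothesis of Theorem~\ref{rep4} that is not algebraic is the existence of a Lipschitz viscosity solution of \eqref{eq:intro_HJs}, and this is where I expect the only genuine work to lie, although it is entirely classical in the discounted regime. The associated Hamiltonian is $H(x,p,u)=H_0(x,p)+\lambda u$, so \eqref{eq:intro_HJs} becomes the discounted equation $\lambda u+H_0(x,Du)=0$, which for $\lambda>0$ on the compact manifold $M$ possesses a unique Lipschitz viscosity solution by standard arguments (see~\cite{DFIZ2016}). Supplying this solution closes the proof, since everything else is the routine substitution above.
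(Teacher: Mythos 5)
Your proposal is correct and takes the same route as the paper, which states this corollary with no written proof beyond the remark that it is ``a consequence of Theorem \ref{rep4}'' (citing \cite{DFIZ2016}); your verification of (L1)--(L3) and (L6) for $L=L_0-\lambda u$, together with the substitution $L_u\equiv-\lambda$ collapsing the weight to $e^{\lambda s}$ and the integrand to $L_0$, is exactly the computation the paper leaves implicit.
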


Fix $\lambda\in\R$. From \eqref{eq:inf_conv}, we also have that
\begin{align*}
	u(t,x)=e^{-\lambda t}\inf_{\xi\in\mathcal{A}_{t,x}}\left\{\phi(\xi(0))+\int^t_0e^{\lambda s}L_0(\xi(s),\dot{\xi}(s))\ ds\right\}.
\end{align*}
Set $L^\lambda(t,x,v)=e^{\lambda t}L_0(x,v)$. Then, the associated Hamiltonian has the form $H^{\lambda}(t,x,p)=e^{\lambda t}H_0(x,e^{-\lambda t}p)$. Therefore, $v(t,x)=e^{\lambda t}u(t,x)$ is a viscosity solution of the Hamilton-Jacobi equation
\begin{equation}\label{eq:dis}
	\begin{cases}
	D_tv+e^{\lambda t}H_0(x,e^{-\lambda t}D_xv)=0,& (t,x)\in(0,+\infty)\times M\\
	v(0,x)=\phi(x),& x\in M,
	\end{cases}
\end{equation}
if and only if $u(t,x)$ is a viscosity solution of
\begin{align*}
\begin{cases}
	D_tu+\lambda u+H_0(x,D_xu)=0,& (t,x)\in(0,+\infty)\times M\\
	u(0,x)=\phi(x),& x\in M,
\end{cases}
\end{align*}

Similarly, $u$ is a viscosity solution of the stationary equation
\begin{align*}
	\lambda u(x)+ H_0(x,Du(x))=0, \quad x\in M
\end{align*}
if and only if $v(t,x)=e^{\lambda t}u(x)$ is a viscosity solution of \eqref{eq:dis} with $\phi=u$. 

One can compare with the discussions in \cite{Chen_Cheng_Zhang2018}.

\subsection{Concluding remarks} From the solving-ODE method used previously, we should have more comments on the representation formula for the viscosity solutions of the contact type Hamilton-Jacobi equations. 
\begin{itemize}[--]
	\item Consider
	\begin{equation}
		\left\{
       \begin{aligned}
       &\dot{u}_{\xi}(s)=L(s,\xi(s),\dot{\xi}(s),u_\xi(s))  \quad a.e.\ s\in[0,t]\\
       &u_{\xi}(0)=u_0.\\
       \end{aligned}
       \right.
	\end{equation}
	Observe that
	\begin{equation}\label{eq:split1}
		L(s,\xi(s),\dot{\xi}(s),u_\xi(s))=L(s,\xi(s),\dot{\xi}(s),0)+\widehat{L_u}(s)\cdot u_{\xi}(s)
	\end{equation}
	where
	\begin{align*}
		\widehat{L_u}(s)=\int^1_0L_u(s,\xi(s),\dot{\xi}(s),\lambda u_\xi(s))\ d\lambda,\quad s\in[0,t].
	\end{align*}
	Therefore,
	\begin{align*}
		u_{\xi}(t)=e^{\int^{t}_0\widehat{L_u}\ d\tau}\cdot u_{\xi}(0)+\int^t_0e^{\int^{t}_s\widehat{L_u}\ d\tau}L(s,\xi(s),\dot{\xi}(s),0)\ ds.
	\end{align*}
	This leads to a new representation formula for solutions of \eqref{eq:HJe}. This formula also affords an easy way for the a priori estimate of $u_{\xi}$ which is essential to ensure the existence of solutions for relevant stationary equations.
	\item Replacing \eqref{eq:split1} by 
	\begin{align*}
		L(s,\xi(s),\dot{\xi}(s),u_\xi(s))=&\,\{L(s,\xi(s),\dot{\xi}(s),u_{\xi}(s))-F(s,\xi(s),\dot{\xi}(s),u_{\xi}(s))\cdot u_{\xi}(s)\}\\
		&\,+F(s,\xi(s),\dot{\xi}(s),u_{\xi}(s))\cdot u_{\xi}(s),
	\end{align*}
	where $F$ is an arbitrary $C^1$ function, we obtain that
	\begin{align*}
		u_{\xi}(t)=&\,e^{\int^{t}_0Fd\tau}\cdot u_{\xi}(0)+\int^t_0e^{\int^{t}_sF\ d\tau}\{L(s,\xi(s),\dot{\xi}(s),u_{\xi}(s))-F(s)u_{\xi}(s)\}\ ds\\
		=&\,e^{\int^{t}_0Fd\tau}\cdot u_{\xi}(0)+\int^t_0e^{\int^{t}_sF\ d\tau}\{L(s,\xi(s),\dot{\xi}(s),0)+(\widehat{L_u}(s)-F(s))u_{\xi}(s)\}\ ds
	\end{align*}
	where $F(s):=F(s,\xi(s),\dot{\xi}(s),u_{\xi}(s))$. This leads to another new representation formula for solutions of \eqref{eq:HJe}. But, it is unclear if such a formula have some applications, when 
	\begin{align*}
		\widehat{L_u}-F>0,\quad F<0,
	\end{align*}
	especially to study the solutions of relevant stationary equations when $L_u>0$.
\end{itemize}

\bibliographystyle{abbrv}
\bibliography{mybib}

\begin{thebibliography}{10}

\bibitem{Bardi_Capuzzo-Dolcetta1997}
M.~Bardi and I.~Capuzzo-Dolcetta.
\newblock {\em Optimal control and viscosity solutions of
  {H}amilton-{J}acobi-{B}ellman equations}.
\newblock Systems \& Control: Foundations \& Applications. Birkh{\"a}user
  Boston, Inc., Boston, MA, 1997.
\newblock With appendices by Maurizio Falcone and Pierpaolo Soravia.

\bibitem{Barles_book}
G.~Barles.
\newblock {\em Solutions de viscosit{\'e} des {\'e}quations de
  {H}amilton-{J}acobi}, volume~17 of {\em Math{\'e}matiques \& Applications}.
\newblock Springer-Verlag, Paris, 1994.

\bibitem{Cannarsa_Cheng3}
P.~Cannarsa and W.~Cheng.
\newblock Generalized characteristics and {L}ax-{O}leinik operators: global
  theory.
\newblock {\em Calc. Var. Partial Differential Equations}, 56(5):Art. 125, 31,
  2017.

\bibitem{Cannarsa_Cheng_Fathi2017}
P.~Cannarsa, W.~Cheng, and A.~Fathi.
\newblock On the topology of the set of singularities of a solution to the
  {H}amilton-{J}acobi equation.
\newblock {\em C. R. Math. Acad. Sci. Paris}, 355(2):176--180, 2017.

\bibitem{Cannarsa_Cheng_Fathi2018}
P.~Cannarsa, W.~Cheng, and A.~Fathi.
\newblock Singularities of solutions of time dependent {H}amilton-{J}acobi
  equations. applications to {R}iemannian geometry.
\newblock preprint, 2018.

\bibitem{CCJWY2019}
P.~Cannarsa, W.~Cheng, L.~Jin, K.~Wang, and J.~Yan.
\newblock {H}erglotz' variational principle and {L}ax-{O}leinik evolution.
\newblock preprint, arXiv:1907.05769, 2019.

\bibitem{CCMW2018}
P.~Cannarsa, W.~Cheng, M.~Mazzola, and K.~Wang.
\newblock {G}lobal generalized characteristics for the {D}irichlet problem for
  {H}amilton-{J}acobi equations at a supercritical energy level.
\newblock preprint, arXiv:1803.01591, 2018.

\bibitem{CCWY2018}
P.~Cannarsa, W.~Cheng, K.~Wang, and J.~Yan.
\newblock {H}erglotz' generalized variational principle and contact type
  {H}amilton-{J}acobi equations.
\newblock In F.~Alabau-Boussouira, F.~Ancona, A.~Porretta, and C.~Sinestrari,
  editors, {\em Trends in Control Theory and Partial Differential Equations},
  volume~32 of {\em Springer INdAM Series}, pages 39--67. Springer-Verlag,
  Berlin, 2019.

\bibitem{Cannarsa_Sinestrari_book}
P.~Cannarsa and C.~Sinestrari.
\newblock {\em Semiconcave functions, {H}amilton-{J}acobi equations, and
  optimal control}, volume~58 of {\em Progress in Nonlinear Differential
  Equations and their Applications}.
\newblock Birkh{\"a}user Boston, Inc., Boston, MA, 2004.

\bibitem{Castelpietra_Rifford2010}
M.~Castelpietra and L.~Rifford.
\newblock Regularity properties of the distance functions to conjugate and cut
  loci for viscosity solutions of {H}amilton-{J}acobi equations and
  applications in {R}iemannian geometry.
\newblock {\em ESAIM Control Optim. Calc. Var.}, 16(3):695--718, 2010.

\bibitem{Chen_Cheng_Zhang2018}
C.~Chen, W.~Cheng, and Q.~Zhang.
\newblock Lasry--{L}ions approximations for discounted {H}amilton--{J}acobi
  equations.
\newblock {\em J. Differential Equations}, 265(2):719--732, 2018.

\bibitem{CCIZ2019}
Q.~Chen, W.~Cheng, H.~Ishii, and K.~Zhao.
\newblock Vanishing contact structure problem and convergence of the viscosity
  solutions.
\newblock {\em Comm. Partial Differential Equations}, 44(9):801--836, 2019.

\bibitem{Coddington_Levinson_book}
E.~A. Coddington and N.~Levinson.
\newblock {\em Theory of ordinary differential equations}.
\newblock McGraw-Hill Book Company, Inc., New York-Toronto-London, 1955.

\bibitem{DFIZ2016}
A.~Davini, A.~Fathi, R.~Iturriaga, and M.~Zavidovique.
\newblock Convergence of the solutions of the discounted {H}amilton-{J}acobi
  equation: convergence of the discounted solutions.
\newblock {\em Invent. Math.}, 206(1):29--55, 2016.

\bibitem{Fathi_Siconolfi2004}
A.~Fathi and A.~Siconolfi.
\newblock Existence of {$C^1$} critical subsolutions of the {H}amilton-{J}acobi
  equation.
\newblock {\em Invent. Math.}, 155(2):363--388, 2004.

\bibitem{Fathi_Siconolfi2005}
A.~Fathi and A.~Siconolfi.
\newblock P{DE} aspects of {A}ubry-{M}ather theory for quasiconvex
  {H}amiltonians.
\newblock {\em Calc. Var. Partial Differential Equations}, 22(2):185--228,
  2005.

\bibitem{Ishii1985}
H.~Ishii.
\newblock On representations of solutions of {H}amilton-{J}acobi equations with
  convex {H}amiltonians.
\newblock In {\em Recent topics in nonlinear {PDE}, {II} ({S}endai, 1984)},
  volume 128 of {\em North-Holland Math. Stud.}, pages 15--52. North-Holland,
  Amsterdam, 1985.

\bibitem{Ishii1988}
H.~Ishii.
\newblock Representation of solutions of {H}amilton-{J}acobi equations.
\newblock {\em Nonlinear Anal.}, 12(2):121--146, 1988.

\bibitem{Ishii_Mitake2007}
H.~Ishii and H.~Mitake.
\newblock Representation formulas for solutions of {H}amilton-{J}acobi
  equations with convex {H}amiltonians.
\newblock {\em Indiana Univ. Math. J.}, 56(5):2159--2183, 2007.

\bibitem{Lions_book}
P.-L. Lions.
\newblock {\em Generalized solutions of {H}amilton-{J}acobi equations},
  volume~69 of {\em Research Notes in Mathematics}.
\newblock Pitman (Advanced Publishing Program), Boston, Mass.-London, 1982.

\bibitem{Mantegazza_Mennucci2003}
C.~Mantegazza and A.~C. Mennucci.
\newblock Hamilton-{J}acobi equations and distance functions on {R}iemannian
  manifolds.
\newblock {\em Appl. Math. Optim.}, 47(1):1--25, 2003.

\bibitem{Mitake2008}
H.~Mitake.
\newblock Asymptotic solutions of {H}amilton-{J}acobi equations with state
  constraints.
\newblock {\em Appl. Math. Optim.}, 58(3):393--410, 2008.

\bibitem{Rampazzo2005}
F.~Rampazzo.
\newblock Faithful representations for convex {H}amilton-{J}acobi equations.
\newblock {\em SIAM J. Control Optim.}, 44(3):867--884, 2005.

\bibitem{Wang_Wang_Yan2017}
K.~Wang, L.~Wang, and J.~Yan.
\newblock Implicit variational principle for contact {H}amiltonian systems.
\newblock {\em Nonlinearity}, 30(2):492--515, 2017.

\bibitem{Wang_Wang_Yan2019_1}
K.~Wang, L.~Wang, and J.~Yan.
\newblock Variational principle for contact {H}amiltonian systems and its
  applications.
\newblock {\em J. Math. Pures Appl. (9)}, 123:167--200, 2019.

\bibitem{Zhao_Cheng2018}
K.~Zhao and W.~Cheng.
\newblock On the vanishing contact structure for viscosity solutions of contact
  type {H}amilton-{J}acobi equations {I}: {C}auchy problem.
\newblock {\em Discrete Contin. Dyn. Syst.}, 39(8):4345--4358, 2018.

\end{thebibliography}

\end{document}